\newcommand{\cL}{\mathcal{L}}
\newcommand{\cM}{\mathcal{M}}
\newcommand{\cN}{\mathcal{N}}
\newcommand{\cT}{\mathcal{T}}
\newcommand{\bx}{\mathbf{x}}
\newcommand{\FF}{\mathbb{F}}
\newcommand{\FP}{\texttt{FP}\xspace}
\newcommand{\RP}{\texttt{RP}\xspace}
\newcommand{\FPs}{\texttt{FP}s\xspace}
\newcommand{\RPs}{\texttt{RP}s\xspace}
\newtheorem{theorem}{Theorem}
\theoremstyle{plain}
\newtheorem{conjecture}{Conjecture}
 \newtheorem*{problem}{Problem}
\newcommand{\twopartdef}[4]
{
 \left\{
 	\begin{array}{ll}
	#1 & \mbox{if } #2\\
	#3 & \mbox{if } #4
	\end{array}
\right.
}
\newcommand{\rom}[1]{\uppercase\expandafter{\romannumeral #1\relax}}
\let\@@pmod\pmod
\DeclareRobustCommand{\pmod}{\@ifstar\@pmods\@@pmod}
\def\@pmods#1{\mkern4mu({\operator@font mod}\mkern 6mu#1)}
\newcommand{\TriPIn}[2]{%
  \mathbin{\vphantom{\largetriangleup}\ooalign{%
    $\m@th#1\largetriangleup$\cr
    \hidewidth$\m@th#1$\scriptsize\raisebox{0.5pt}{$\times$}\hidewidth\cr}}%
}
\newcommand{\TriSIn}[2]{%
  \mathbin{\vphantom{\largetriangleup}\ooalign{%
    $\m@th#1\largetriangleup$\cr
    \hidewidth$\m@th#1$\scriptsize\raisebox{1pt}{$+$}\hidewidth\cr}}%
}
\begin{document}

\footnotetext{Key words and phrases: Factorials; modular mappings; random sequences; Poisson
distribution; fixed points; Stauduhar's Conjecture.}

\footnotetext{2010 Mathematics Subject Classification: %
primary 11N69, 
secondary 49J55, 
65C10. 
}

\title{Factorials $\pmod* p$ and the average of modular mappings}
\author{Cristian Cobeli,  Alexandru Zaharescu}

\address{
CC: \textit{Simion Stoilow} Institute of Mathematics of the Romanian Academy, 
21 Calea Grivi\c tei Street,
P. O. Box 1-764, RO-014700, Bucharest, Romania}
\email{cristian.cobeli@imar.ro}

\address{
AZ: Department of Mathematics, 
University of Illinois at Urbana-Champaign,
Altgeld Hall, 1409 W. Green Street,
Urbana, IL, 61801, USA 
and 
\textit{Simion Stoilow} Institute of Mathematics of the Romanian Academy, 
21 Calea Grivi\c tei Street,
P. O. Box 1-764, RO-014700, Bucharest, Romania}
\email{zaharesc@illinois.edu}

\begin{abstract}
We have known that most sequences in $\mathcal{M}=\{1,2,\dots, M\}$ with length $n$ 
will miss $Me^{-\lambda}$ of the total numbers of  $\{1,2,\dots,M\}$ as the ratio
$n/M$ tends to $\lambda$. Now we consider a more general case where the numbers in
$\{1,2,\dots,M\}$ are achieved exactly k times by a 'random' sequence $f(1), f(2),\dots,f(n)$. We 
show that if $n/M\rightarrow \lambda$, then the limit has a Poisson distribution, that is, the 
proportion of sequences for which some number in $\mathcal{M}$ is achieved  exactly $k$ times has 
the limit $\frac{\lambda^k}{k!}e^{-\lambda}$.
We conjecture that this is the behavior of the factorial mapping modulo a prime and 
present a few supporting arguments.
\end{abstract}
\maketitle

 \section{Introduction}


\begin{figure}[t]
\centering
 \hspace*{1mm}  
     \includegraphics[width=0.98\textwidth]{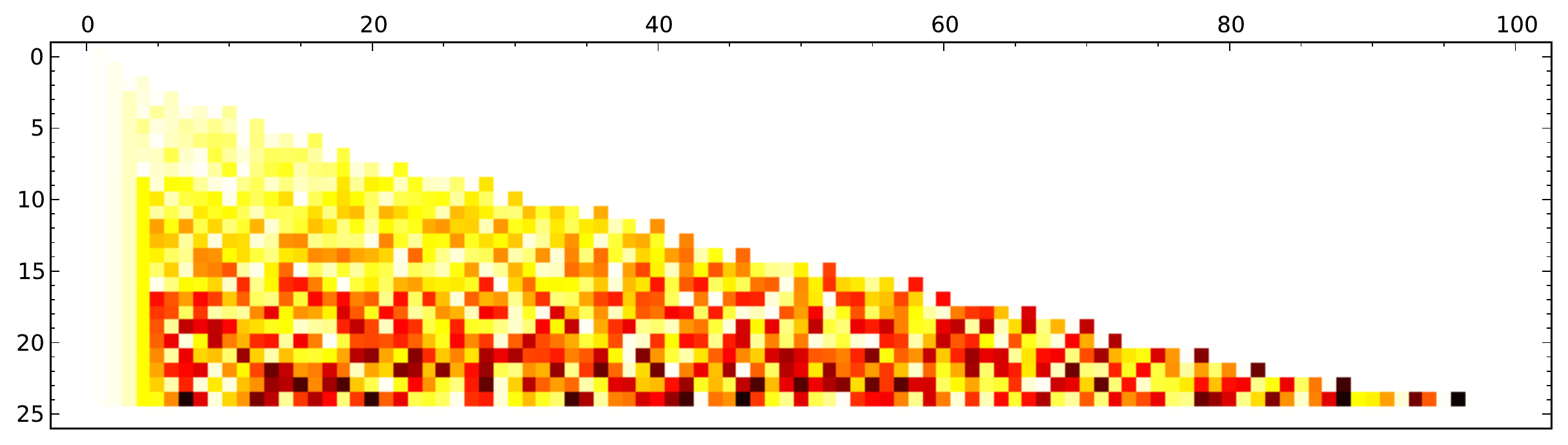}\\
     \includegraphics[width=0.99\textwidth]{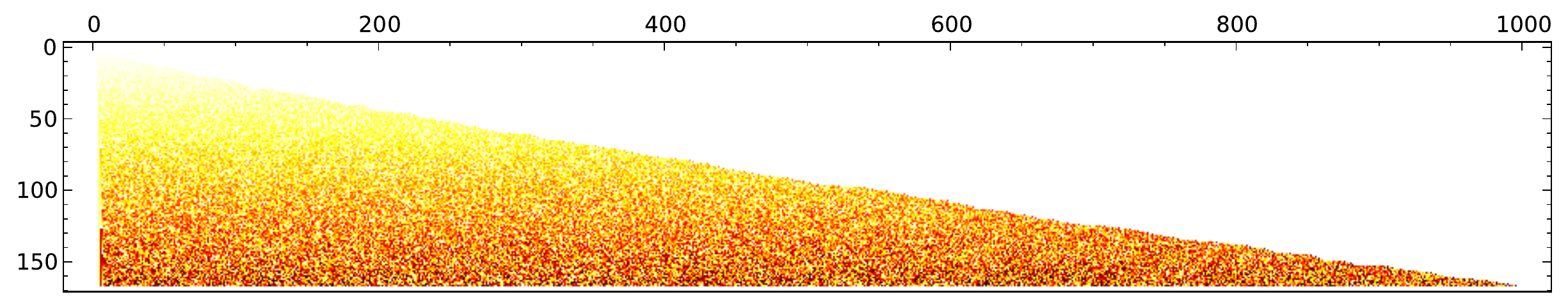}
\caption{The residue classes $n! \pmod* p$, $1\le n\le p-1$, for primes $p <100$, top and for
primes $p< 1000$, bottom. Different colors correspond to different residue classes.
}
 \label{Figure1}
 \end{figure}

\noindent
In contrast with many of their real siblings, which are continuous, modular functions are known to
be chaotic in nature. This is true for polynomials, monomials in particular, and this behavior
accentuates as the power increases from $2$ to the value of the modulus minus $2$, that is, from 
squares to inverses.  The matter was studied extensively in 
the works of 
the authors et al. 
~\cite{CZ2000, CZ2001, CVZ2000a, CVZ2003, CGZ2003},
P. Kurlberg et al.~\cite{KR1999, Kur2000, Kur2009},
and
I. E. Shparlinski~\cite{Shp2012}.

Since the factorial mapping $x\mapsto x!$ modulo a prime $p$ acts like a 'diagonal of the
monomials', one expects it to induce an even higher degree of randomness. Many visual
representations, such as those in Figure~\ref{Figure1}, confirm the expectation.
One of the main issues is raised by the following problem.
\begin{conjecture}[R. Stauduhar]\label{ConjectureStauduhar}
     Let $p$ be a prime and let $h(p)$ be the number of distinct residues of 
$1!, 2!,\dots,(p-1)!\pmod*  p$. Then 
\[
\lim_{p\to \infty}\frac{h(p)}{p}=1-\frac 1e\,.     
\]
\end{conjecture}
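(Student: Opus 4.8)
The plan is to deduce the conjecture from the occupancy (balls-in-boxes) result stated above, specialized to the parameter $\lambda=1$. View the factorial mapping as the sequence $f(n)=n!\pmod* p$, $1\le n\le p-1$, which takes values in $\cM=\{1,2,\dots,p-1\}$, the nonzero residue classes (note that $n!\not\equiv 0$ for $n<p$, since every factor is invertible). Here the length of the sequence and the size of the target set are both $p-1$, so the ratio $n/M=1$ already equals the limiting value $\lambda=1$. For a residue $a$ set
\[
N_a(p)=\#\{\,1\le n\le p-1:\ n!\equiv a\pmod* p\,\},
\]
so that $h(p)=\#\{a:N_a(p)\ge 1\}=(p-1)-\#\{a:N_a(p)=0\}$. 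If the deterministic family $\{N_a(p)\}_a$ exhibits the same Poisson statistics that the random-sequence result predicts, then taking $k=0$ shows the number of missed residues is $\sim e^{-1}(p-1)$, whence $h(p)/p\to 1-1/e$. Thus it suffices to establish the Poisson behavior of $\{N_a(p)\}_a$.

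I would do this by the method of moments, which applies because the Poisson law is determined by its moments. For a Poisson$(1)$ variable the $r$-th falling-factorial moment equals $1$ for every $r\ge 1$, so it is enough to prove, for each fixed $r$,
\[
\frac1p\sum_{a}N_a(p)\bigl(N_a(p)-1\bigr)\cdots\bigl(N_a(p)-r+1\bigr)\xrightarrow[p\to\infty]{}1.
\]
The left-hand sum counts ordered $r$-tuples of distinct indices $(n_1,\dots,n_r)$ with $n_1!\equiv\cdots\equiv n_r!\pmod* p$. Since the equality $n_i!\equiv n_j!$ for $n_i<n_j$ is equivalent to
\[
(n_i+1)(n_i+2)\cdots n_j\equiv 1\pmod* p,
\]
the moment reduces to counting index tuples for which all of the corresponding products of consecutive integers are $\equiv 1$ modulo $p$.

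To estimate these counts I would detect each congruence with multiplicative characters, writing $\mathbf 1[x\equiv 1]=\frac{1}{p-1}\sum_{\chi}\chi(x)$ and expanding the $r-1$ independent conditions $n_1!\equiv n_2!\equiv\cdots\equiv n_r!$. The all-principal-character term contributes $\tfrac{1}{(p-1)^{r-1}}$ times the number $(p-1)(p-2)\cdots(p-r)\sim p^{r}$ of ordered tuples of distinct indices, i.e. the expected main term of size $\sim p$, which after normalization yields exactly the Poisson moment $1$. The entire burden therefore falls on showing that the nontrivial characters give a negligible contribution, i.e. that the sums
\[
\sum_{n_i<n_j}\chi\bigl((n_i+1)(n_i+2)\cdots n_j\bigr)
\]
and their higher-dimensional analogues are $o(p)$.

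The hard part, and the reason the statement remains only a conjecture, is precisely these character sums over products of consecutive integers. No nontrivial cancellation for them is presently known: already the case $r=2$ amounts to understanding how often a ratio $n_j!/n_i!$ of two factorials is $\equiv 1\pmod* p$, and the factorial sequence possesses neither the additive structure nor the simple multiplicative self-similarity that the Weil and Burgess bounds exploit. I therefore expect that a complete proof will require a genuinely new input---some quantitative decorrelation statement for the factorial sequence itself---rather than an application of existing character-sum technology. The moment reduction above is nonetheless valuable in that it isolates exactly which analytic estimates would close the argument.
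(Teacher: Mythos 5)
You should be clear at the outset that this statement is Conjecture~\ref{ConjectureStauduhar} of the paper and is explicitly open: the paper contains no proof of it, and offers instead (i) the theorem that the Poisson behavior holds \emph{on average} over all mappings from an $n$-element set into $\cM$ (Theorems~\ref{Theorem1}, \ref{Theorem3} and Theorem~\ref{TheoremMain}), citing~\cite{CVZ2000b} for the original average result, and (ii) numerical evidence (Tables~\ref{Table23}, \ref{Table1}, Figure~\ref{Figure2}). So no blind attempt could legitimately ``prove'' the statement, and to your credit your proposal does not claim to: it is a conditional reduction, and the gap you yourself flag is real and is exactly where the difficulty lives. Concretely, your method-of-moments reduction (falling-factorial moments of $\{N_a(p)\}_a$ all tending to $1$, which determines the Poisson$(1)$ law since it satisfies the moment growth condition) is sound as a reduction, and your translation of $n_i!\equiv n_j!\pmod* p$ into the condition $(n_i+1)(n_i+2)\cdots n_j\equiv 1\pmod* p$ is correct. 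But the required estimate --- nontrivial cancellation, uniformly in the relevant ranges, in sums of the shape $\sum\chi\big((n_i+1)\cdots n_j\big)$ over products of consecutive integers of essentially arbitrary length --- is not known; the best available results in this direction (Garaev--Luca--Shparlinski~\cite{GLS2004, GLS2005}, Klurman--Munsch~\cite{KM2017}) fall far short of the $o(p)$ savings your argument needs, already at $r=2$. So the proposal isolates the obstruction but does not close it, and the statement remains a conjecture under your approach just as it does in the paper.

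It is worth noting how your route differs from what the paper actually does. The paper sidesteps the factorial sequence entirely: it computes the mean $A(k;M,n)$ and second moment $M_2(k;M,n)$ of the counter $m_k(\bx)/n$ over \emph{all} tuples $\bx\in\cT=\cM^n$ by exact combinatorial identities (equations~\eqref{eqpk}, \eqref{eqS2bis}) and then applies a Chebyshev-type argument to conclude that almost every tuple exhibits the Poisson proportions --- a second-moment method for the random model, with no character sums anywhere. Your proposal instead attacks the specific deterministic sequence via all higher correlations, which is strictly harder: the paper's variance computation corresponds roughly to your $r=2$ moment, but averaged over all sequences rather than evaluated at the factorial one. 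Your framing does match the paper's Conjecture~\ref{ConjecturePoisson} (Poisson statistics for $n!\pmod* p$, of which Stauduhar's statement is the $k=0$ case), so the reduction has expository value; but as a proof of the stated conjecture it has an unfillable hole at the character-sum step, which is precisely why the problem has been open since 1963.
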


Stauduhar's Conjecture is stated as Problem~$7$ in a list of $114$ problems, which, according to a
nice tradition of the time, appeared  in the Proceedings of
the 1963 Number Theory Conference~\cite{NTC1963}.  
In several later articles, various authors referred to the same problem, but in the formulation of
R. K. Guy~\cite[Problem F11]{Guy2004}. 
There,  Guy draws attention to the phenomenon that the sequence of factorials 
$1!, 2!,\dots,p!$ misses about $p/e$ residue classes modulo $p$. 
The conjecture remains unsolved, but it was proved~\cite{CVZ2000b} that it holds in
average for all modular mappings or, in other words, 
this is a characteristic of randomly chosen sequences of $p$ classes of residues modulo $p$.
The link with the 'randomness' was made earlier, as we have found in the last stage of the
preparation of this manuscript. Thus, Brillhart~\cite[Problem 7, page 90]{NTC1963}  
adds a comment to the statement of the conjecture, saying in parentheses  that
\textit{``from extensive numerical calculation
the statement appears to be true''} and \textit{'this is the ``random result'' for any set of 
numbers'}.


In this article we show that in average, the expected number of residue classes 
that are reached  exactly $k\ge 0$ times by a random modular function defined from 
$\FF_p$ with values in $\FF_p$ equals $\frac{1}{k!e}$. The limit asymptotic values for the mean 
and variation are obtained in Theorems~\ref{Theorem1} and the size of the error term are evaluated 
in \ref{Theorem3} below. 
Accordingly, for factorials, this allows us to extend Stauduhar's Conjecture, since 
wide-ranging numerical verifications confirm this trend.

\begin{conjecture}\label{ConjecturePoisson}
     For any integer $k\ge 0$, the proportion of elements $y\in\FF_p$ for which there are 
exactly $k$ positive integers $n$ for which $n!\equiv y\pmod* p$ tends to the limit 
$\frac{1}{k!}e^{-1}$, as $p\to\infty$.
\end{conjecture}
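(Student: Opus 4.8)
The plan is to establish this conjecture by transporting the Poisson limit law, which holds for genuinely random sequences (the main analytic result of this paper, valid with $\lambda=\lim (p-1)/p=1$), to the factorial map, using the \emph{method of moments}. For $y\in\FF_p$ write
\[
c_y=c_y(p):=\#\{\,1\le n\le p-1 : n!\equiv y \pmod* p\,\},
\]
and let $V_k(p)=\#\{\,y\in\FF_p : c_y=k\,\}$, the quantity whose normalization $V_k(p)/p$ we must control. (Only $y\in\FF_p^\times$ can have $1\le c_y$ in this range; the single class $y=0$, whose preimage set under all positive $n$ is infinite, contributes to no finite $k$ and has density $1/p\to0$.) Since a Poisson$(1)$ law is determined by its factorial moments, all of which equal $1$, it suffices to prove that for every fixed integer $j\ge1$,
\[
\frac{1}{p}\sum_{y\in\FF_p} c_y(c_y-1)\cdots(c_y-j+1)\;\longrightarrow\;1 \qquad (p\to\infty).
\]
The method of moments then yields convergence in distribution of $c_y$ (for uniformly random $y$) to Poisson$(1)$, hence $V_k(p)/p\to \frac{1}{k!}e^{-1}$ for each $k\ge0$.

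The first step is to read the left-hand side above as a collision count: it equals $\tfrac1p$ times the number of ordered $j$-tuples of \emph{distinct} indices $n_1,\dots,n_j\in\{1,\dots,p-1\}$ sharing a common factorial residue, $n_1!\equiv\cdots\equiv n_j! \pmod* p$. Ordering $n_1<\cdots<n_j$ and using that $n_a!\equiv n_b!$ is equivalent to the product of consecutive integers $(n_a+1)(n_a+2)\cdots n_b\equiv 1 \pmod* p$, each collision becomes a system of congruences asserting that several blocks of consecutive integers are $\equiv1$. The random model predicts exactly the value $1$: throwing $p-1$ balls into $p-1$ boxes gives $j$-th factorial moment $(p-1)(p-2)\cdots(p-j)/(p-1)^{\,j-1}\to1$. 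Thus the entire conjecture reduces to showing that, for each fixed $j$, the number of solutions of these consecutive-product congruences agrees asymptotically with the count predicted if the residues $n! \pmod* p$ were independent and uniform.

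The second step is to attack these solution counts by harmonic analysis over $\FF_p$. Detecting each relation $(n_a+1)\cdots n_b\equiv1$ by multiplicative characters $\chi$ leads to sums of the shape $\sum_\chi \prod \chi\bigl(\text{products of consecutive integers}\bigr)$, in which the off-diagonal terms should generate the main contribution $\sim p/j!$ (before accounting for the $j!$ orderings) while the rest cancel. The case $j=2$ is the concrete and already delicate problem of counting pairs $a<b$ with $\prod_{i=a+1}^{b} i\equiv1 \pmod* p$. One structural constraint is available from Wilson's theorem in the form $n!\,(p-1-n)!\equiv(-1)^{n+1} \pmod* p$, which couples the factorial values; but such relations do not supply the cancellation required in the character sums.

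I anticipate that this is the main obstacle. The ``variables'' here are not free residues but long intervals of consecutive integers embedded inside a factorial, so the resulting character sums are highly structured and lie outside the reach of the Weil bound and its standard refinements; no estimate currently yields the needed square-root cancellation for products over intervals. Conditional on a suitable equidistribution hypothesis for products of consecutive integers modulo $p$ — morally, that such products behave like independent uniform elements of $\FF_p^\times$ — the moment computation closes and the statement follows from the Poisson limit law for random sequences. Making that hypothesis unconditional, rather than importing it, is precisely where the genuine difficulty resides, which is why the assertion is presented here as a conjecture, supported by the random model and by the extensive numerical evidence.
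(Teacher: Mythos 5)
This statement is one of the paper's two conjectures, and the paper does not prove it: its support consists of the unconditional average results for random modular mappings (Theorems~\ref{Theorem1} and~\ref{Theorem3}, and the concentration statement in Theorem~\ref{TheoremMain}, all proved by computing the first and second moments of $m_k(\bx)/n$ over the ensemble of all $n$-tuples and applying a Chebyshev-type inequality), together with the numerical evidence in Figure~\ref{Figure2} and Table~\ref{Table1}. Your proposal, to its credit, reaches the same verdict and says so explicitly. The step that fails --- and that you yourself flag --- is the equidistribution hypothesis for products of consecutive integers modulo $p$: after your (correct) factorial-moment reduction, everything hinges on showing that for each fixed $j$ the number of tuples $n_1<\cdots<n_j$ in $\{1,\dots,p-1\}$ with $(n_a+1)(n_a+2)\cdots n_b\equiv 1 \pmod* p$ for all $a<b$ is asymptotic to $p/j!$. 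The relevant character sums involve products over intervals whose length can be of order $p$, far outside the reach of Weil-type bounds, and no known estimate supplies the square-root cancellation you need. So what you have is a conditional program plus a correct identification of the obstruction, not a proof; since the statement is an open conjecture, no blind attempt could do better unconditionally, and your honesty about where the difficulty sits is exactly right.

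It is worth noting how your route differs from the paper's supporting argument. The paper never touches the factorial sequence itself: it proves that the Poisson proportions $\frac{\lambda^k}{k!}e^{-\lambda}$ hold for \emph{almost all} tuples $\bx\in\cT$, using only two moments and Chebyshev, and then conjectures that the factorial tuple is typical. You instead attack the single deterministic sequence directly via the method of moments, which forces you to control \emph{all} factorial moments (two moments give nothing for one fixed sequence, since there is no ensemble to average over); your reductions --- restricting to $y\in\FF_p^\times$, discarding the $y=0$ class, invoking determinacy of the Poisson law by its moments, and converting collisions $n_a!\equiv n_b!$ into consecutive-product congruences --- are all sound. Two minor slips, neither structural: the displayed random-model quantity $(p-1)(p-2)\cdots(p-j)/(p-1)^{j-1}$ tends to $p$, not $1$; it is your $1/p$ normalization that produces the limit $1$. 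And ``the off-diagonal terms should generate the main contribution'' has the bookkeeping reversed: in the standard character-sum expansion the principal character supplies the main term $\sim p/j!$, and it is the nonprincipal characters whose cancellation is the unproven part.
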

Notice that Stauduhar's Conjecture is just the $k=0$ case of Conjecture~\ref{ConjecturePoisson} and 
the general statement says that if $p$ is sufficiently large, then the elements of the sequence 
$\{n! \pmod* p\}_{1\le n\le p}$ behave like a Poisson process with mean $\lambda=1$.
More background data are presented in \mbox{Section~\ref{SectionRamificationPoints}.}

Our main result is a uniform estimation over all modular mappings 
of the average number of elements attained exactly a certain fixed number of times.
Let $\cM$ be a finite set of $M$ elements and let $\cN\subset\cM$ be a subset of $n$ elements. 
Denote by $\cT$ the set of $n$-tuples with components in $\cM$, that is, $\cT=\cM^n$ or
$\cT=\big\{ \bx =(x_1,x_2,\dots,x_n) :\  \  x_1,\dots,x_n \in \cM \big\}$.
We say that an element \textit{$y \in \cN$ is represented $k$ times in the tuple $\bx\in\cT$}, if
exactly $k$ components of $\bx$ coincide \mbox{with $y$.} 
Let $m_k(\bx)$ be the
number of elements of $\;\cN$ that are represented exactly $k$ times by $\bx$, \mbox{that is},
\begin{equation}\label{eqmkx}
	\begin{split}
	  m_k(\bx):=\Big|\big\{y\in \cN : \ \text{ $y$ is represented exactly $k$ times in $\bx$}
	  \big\}\Big|\,.
	\end{split}
\end{equation}
It turns out that if the sets $\cN$ and $\cM$ have sufficiently many elements, than the proportions
$m_k(\bx)/n$ cluster around the values of a Poisson distribution. A controlled bound of the 
maximum deviation is obtain in the following  theorem.
\begin{theorem}\label{TheoremMain}
Let $\lambda\in(0,1]$, $\gamma\in[0,1)$, $\delta\in(0,(1-\gamma)/2)$ and let $k\ge 0$  be integer.
Suppose the integer variables $M$ and $n$  satisfy the inequalities $k\le n\le M$  and 
$M=n/\lambda +O(n^\gamma)$, uniformly on $\lambda$ and $k$ as $n$ tends to infinity while $\gamma$ 
is fixed. Then
\begin{equation*}
  \frac{1}{|\cT|} \cdot
  \Big|\Big\{\bx :\ \Big|\frac {m_k(\bx )}{n}-\frac{\lambda^k}{k!}e^{-\lambda}\Big|<
		    n^{-\delta}\Big\}\Big|
	  = 1-O\left(\frac{\lambda^k}{k!}\cdot\frac{1}{n^{1-2\delta}}
	\Big(1+\frac{\lambda^{k+1}(\lambda+k)}{k!}n^{\gamma}\Big)\right),
\end{equation*}
and the constant involved in the big Oh term does not dependent on $\lambda, \gamma, \delta$ and 
$k$.
\end{theorem}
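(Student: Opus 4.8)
The plan is to treat $\bx$ as a uniformly random element of $\cT=\cM^n$, which is the same as drawing the coordinates $x_1,\dots,x_n$ independently and uniformly from $\cM$; this recasts the statement as a classical occupancy (balls-into-bins) problem and invites a second moment argument. For each $y\in\cN$ let $\xi_y=\xi_y(\bx)$ be the indicator that $y$ is represented exactly $k$ times, so that $m_k(\bx)=\sum_{y\in\cN}\xi_y$. Since each coordinate equals a fixed $y$ with probability $1/M$, the number of coordinates equal to $y$ is $\mathrm{Binomial}(n,1/M)$, whence
\[
  p_k:=\frac{1}{|\cT|}\sum_{\bx}\xi_y(\bx)=\binom{n}{k}\frac{1}{M^k}\Big(1-\frac1M\Big)^{n-k}
\]
is independent of $y$, and the mean is $\tfrac1{|\cT|}\sum_{\bx} m_k(\bx)=n\,p_k$.

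First I would establish the Poisson approximation for $p_k$. Writing $M=n/\lambda+O(n^\gamma)$ gives $n/M=\lambda+O(\lambda^2 n^{\gamma-1})$, so expanding $\binom nk M^{-k}=\tfrac{\lambda^k}{k!}\bigl(1+O(kn^{\gamma-1}+k^2/n)\bigr)$ and $(1-1/M)^{n-k}=e^{-\lambda}\bigl(1+O((\lambda^2+k\lambda)n^{\gamma-1})\bigr)$, then collecting terms and using $\lambda\le1$, yields the bias estimate
\[
  \Big|p_k-\frac{\lambda^k}{k!}e^{-\lambda}\Big|\ll \frac{\lambda^{k+1}(\lambda+k)}{k!}\,e^{-\lambda}\,n^{\gamma-1}.
\]
The constraint $\delta<(1-\gamma)/2<1-\gamma$ is precisely what makes this bias $o(n^{-\delta})$, so that a deviation of $m_k/n$ from $\tfrac{\lambda^k}{k!}e^{-\lambda}$ by more than $n^{-\delta}$ either forces a deviation from the true mean $p_k$, or is itself accounted for by the squared bias in the estimate below.

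The heart of the argument is the variance. Because $\xi_y^2=\xi_y$, expanding $m_k^2$ gives
\[
  \frac1{|\cT|}\sum_{\bx} m_k(\bx)^2=n\,p_k+n(n-1)\,p_{kk},\qquad
  p_{kk}=\frac{n!}{k!\,k!\,(n-2k)!}\,\frac{1}{M^{2k}}\Big(1-\frac2M\Big)^{n-2k},
\]
where $p_{kk}$ is the joint probability that two distinct $y\ne y'$ are each represented exactly $k$ times, so that $\mathrm{Var}(m_k)=n\,p_k(1-p_k)+n(n-1)(p_{kk}-p_k^2)$. I would bound the covariance by comparing the factorial ratio $\tfrac{((n-k)!)^2}{n!\,(n-2k)!}=\prod_{j=0}^{k-1}\tfrac{n-k-j}{n-j}$ against the exponential ratio $(1-2/M)^{n-2k}/(1-1/M)^{2(n-k)}$; taking logarithms and expanding, the leading contributions assemble into $p_{kk}/p_k^2=\exp\!\bigl(-(k-n/M)^2/n+O(\cdots)\bigr)$. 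Thus the covariance is negative to main order, giving $n(n-1)(p_{kk}-p_k^2)\le0$ up to a remainder of size $O\bigl(\tfrac{\lambda^{2k+2}(\lambda+k)}{(k!)^2}n^{\gamma}\bigr)$ produced by pushing the $O(n^\gamma)$ ambiguity in $M$ through the cancellation, so that $\mathrm{Var}(m_k)\ll n\,p_k\ll n\,\tfrac{\lambda^k}{k!}$.

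Finally I would feed these into Chebyshev's inequality about the target value $c=\tfrac{\lambda^k}{k!}e^{-\lambda}$:
\[
  \frac1{|\cT|}\Big|\Big\{\bx:\Big|\tfrac{m_k(\bx)}n-c\Big|\ge n^{-\delta}\Big\}\Big|
  \le n^{2\delta}\Big(\frac{\mathrm{Var}(m_k)}{n^2}+\big(p_k-c\big)^2\Big),
\]
whereupon the variance bound yields the summand $\tfrac{\lambda^k}{k!}n^{2\delta-1}$, while both the squared bias and the covariance remainder are dominated by $\tfrac{\lambda^{2k+1}(\lambda+k)}{(k!)^2}n^{\gamma+2\delta-1}$, together giving the claimed error term; all implied constants are absolute because each estimate is made uniformly in $\lambda\in(0,1]$, $k$, and $\gamma\in[0,1)$, the factorial factors $1/k!$ taming the dependence on $k$. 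I expect the main obstacle to be exactly the covariance step: one must expand $p_{kk}/p_k^2$ with enough precision to exhibit the negative main term $-(k-n/M)^2/n$ and, more delicately, to control how the $O(n^\gamma)$ uncertainty in $M$ perturbs this cancellation, uniformly in both $k$ and $\lambda$.
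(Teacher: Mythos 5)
Your proposal is correct and its skeleton coincides with the paper's: both compute the first and second moments of $m_k$ in closed form over $\cT$ and finish with a Chebyshev-type inequality. Indeed, your $p_k=\binom{n}{k}M^{-k}\bigl(1-\frac1M\bigr)^{n-k}$ is exactly the paper's average $A(k;M,n)$ from \eqref{eqpk}, your $np_k+n(n-1)p_{kk}$ is exactly the paper's $S_2$ from \eqref{eqS2bis}, and your final bound $n^{2\delta}\bigl(\mathrm{Var}(m_k)/n^2+(p_k-c)^2\bigr)$, with $c=\frac{\lambda^k}{k!}e^{-\lambda}$, is identical to the paper's $\eta^{-2}M_2(k;M,n)$ at $\eta=n^{-\delta}$ in \eqref{eqM2last}, because $M_2=\mathrm{Var}(m_k/n)+(A-c)^2$; the paper simply runs Chebyshev on the second moment about the Poisson constant $c$ directly, so bias and variance never appear as separate terms. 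The one genuine divergence is what you call the heart of the argument: you propose to prove negative correlation via $p_{kk}/p_k^2=\exp\bigl(-(k-n/M)^2/n+O(\cdots)\bigr)$ and drop the covariance. That expansion is correct (the exact-occupancy indicators are negatively correlated to main order), but it is harder than what is needed, and the paper avoids it entirely: it writes $M_2=c^2-2cA+S_{I}+S_{II}$ as in \eqref{eqM2S2} and \eqref{eqSISII}, estimates $S_{I}=c^2\bigl(1+O(\lambda(\lambda+k)n^{\gamma-1})\bigr)$ and $S_{II}=A/n$ by the same two expansions \eqref{eqpow}--\eqref{eqexp} already used for $A$, and lets the cancellation in $c^2-2cA+S_{I}$ produce the $n^{\gamma-1}$ contribution while $S_{II}$ produces the $1/n$ contribution. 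In particular the sign of the covariance is never used: since $n^2p_k^2(k-n/M)^2/n\ll np_k$ when $\lambda\le 1$ (as $\lambda^kk^2/k!$ is bounded), even the negative main term you exhibit is absorbed by the $np_k$ contribution, so a crude two-sided estimate $|p_{kk}-p_k^2|=p_k^2\cdot O(\text{relative errors})$ --- which is what the paper's route amounts to --- already yields the stated error term and spares you the delicate uniform-in-$k$ and $\lambda$ cancellation analysis that you rightly flag as the main obstacle in your plan.
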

\noindent
Let us notice that both summands in the error term have their particular type of 
contribution. The distinction can be seen whether or not $\lambda$ is very small, or whether 
$k = 0$, or else, whether $k$ is small or it becomes large enough to make one or the other of the 
terms of the sum dominate on one side or the other of the balance point, which is attained if 
$n\approx\left(k!/(\lambda^{k+1}(\lambda+k)\right)^{1/\gamma}$.

 \section{Ramification points and factorials $\pmod* p$}\label{SectionRamificationPoints}

\noindent 
Referring to the distribution of the sequence of factorials modulo a prime $p$, an earlier simpler
problem was proposed by P. Erd\H os~\cite[problem F11]{Guy2004}. He asks whether there
exists $p>5$ for which the numbers $2!, 3!,\dots, (p-1)!$ are all distinct modulo $p$. 
If there were such a prime, then \mbox{T. Trudgian}~\cite{Tru2014}
verified that it must be greater than $10^9$. The problem of Erd\H os is still unsolved,
although B. Rokowska, A. Schinzel~\cite{RS1960}
and \mbox{Trudgian}~\cite{Tru2014} showed that for large classes of primes the sequence of
factorials modulo $p$ can not be so close to a permutation, while Klurman and Munsch~\cite{KM2017}
obtained non-trivial bounds for the average deviation.

In reality, for any given $p$, one can check that there are many residue classes
\mbox{modulo $p$} that
are hit more than once by the sequence of factorials. 
Iterating the  factorial function \mbox{modulo $p$},  two types of points
distinguish in the complex created tree.
They are the fixed points, or the roots of the tree, and the ramification points, which
are the residue classes reached several times by the factorials.

In precise terms, we say that $1\le x \le p-1$ is a \textit{fixed point} or, shortly, $x$ is an \FP
of the factorial function modulo $p$, if $x!\equiv x \pmod* p $.
The residue classes modulo $p$, viewed as stacks that are reached a different number of times by the
sequence  $1!,2!, 3!,\dots, (p-1)! \pmod* p$, are also called \textit{ramification points} of the
factorial function modulo $p$ or,
shortly, \RPs.     
Thus, rigorously, we say that  $y\in\{1,\dots, p-1\}$ is a $k$-\RP, if  it is hit exactly $k$ times
by the sequence $1!,2!, 3!,\dots, (p-1)!$, that is, there exist exactly $k$ distinct integers 
$x_1,\dots, x_k\in \{1,2,\dots,p-1\}$ such that $x_1!\equiv \cdots\equiv x_k!\equiv y\pmod* p$.
For example, in \mbox{Table}~\ref{Table23} one sees that for $p=23$, the points 
$a=1, 2, 5, 9, 12, 22$ are \FPs; 
$10, 15, 16, 17, 19, 20$ are $0$-\RPs (they are missed by the factorial function modulo $23$);
$2, 3, 4,\dots$, $9$, $11$, $12$, $13$, $14$, $18$, $21$ are $1$-\RPs (they are hit exactly once by
the factorial function);
$22$ is a $3$-\RP (it~is attained three times) and
$1$ is a $5$-\RP (it appears five times on the second row of the table).
 
\medskip
\begin{center}
\captionof{table}{The values of the modular factorial function $x\mapsto x!\pmod p$, for
$p=23$.}\label{Table23}
\setlength{\tabcolsep}{4.35pt}
\small\footnotesize
\begin{tabular}{ccccccccccccccccccccccc}
\hline
$x$
& $1$ & $2$& $3$ & $4$ & $5$& $6$ & $7$ & $8$ & $9$ & $10$ & $11$
& $12$& $13$ & $14$ & $15$& $16$ & $17$ & $18$ & $19$ & $20$& $21$ & $22$\\
$x!\pmod p$ 
& $1$ & $2$& $6$ & $1$ & $5$& $7$ & $3$ & $1$ & $9$ & $21$ & $1$
& $12$& $18$ & $22$ & $8$& $13$ & $14$ & $22$ & $4$ & $11$& $1$ & $22$\\
\hline
\end{tabular} 
\end{center}  
\medskip\medskip

\noindent
Remark that $1, 2$ and $p-1$ are always \FPs (by Wilson's Theorem), but congruences such as
\begin{equation*}\label{eqExamples}
	\begin{split}	
	  4244208!\equiv  4244208\pmod* {9991769}, 
\quad\qquad
	  5112195!\equiv 5112195\pmod* {5444407} 
	\end{split}
\end{equation*}
are sporadic. Quite often, the median $m=(p+1)/2$ is an almost trivial fixed point, too, since
$m!\equiv m\pmod* p$ or $m!\equiv m-1 \pmod* p$ for any prime $p\equiv 3 \pmod* 4$.

\smallskip

Also, notice this intrinsic connection between \FPs and the particular ramification point $1$: \\
for any prime  $p\ge 3$,
the number of \FPs is $r$   if and only if $1$ is an $(r-1)$-\RP.

Let $m(p)$  be the number of $0$-\RPs, that is, the number of residual classes
\mbox{$1, 2, \dots,p-1$} that are missed by the sequence of factorials modulo $p$.
Table~\ref{Table1} shows evidences in favor of Stauduhar's Conjecture~\ref{ConjectureStauduhar}.

\medskip
With so many residue classes missed by the factorials, we expect that the sequence of factorials
is always very far from a permutation, but a proof of this fact is still needed.
A comparison approach is taken by Lev~\cite{Lev2006} who tests the size of the partial sums of 
the elements of permutations in abelian groups.
Upper and lower bounds for the number of distinct residue classes of $n!\pmod* p$ under different 
conditions were obtained by Banks et al.~\cite{BLSS2005},
Garaev et al.~\cite{GLS2004, GH2017},
Klurman and Munsch~\cite{KM2017}. 
The interesting properties of $n!\pmod* p$ have been studied from different perspectives by 
us~\cite{CVZ2000b},
Shub and  Smale~\cite{RS1995},
Markstr\"om~\cite{Mar2015},
Luca et al.~\cite{LS2003, LS2005a, LS2005b},
Cheng~\cite{Che2004},
Broughan et al.~\cite{BB2009},
Garaev et al.
~\cite{GLS2004, GLS2005},
Banks et al.
~\cite{BLSS2005},
Dai et al.~\cite{CD2006, Dai2008},
\mbox{Garc\'ia}~\cite{Gar2007}, \cite{Gar2008}.

\smallskip
\begin{center}
\captionof{table}{The proportion of residue classes missed by factorials for different primes.
Compare the values from the third column  with $1/e\approx 0.3678794$.}\label{Table1}
\small\footnotesize
\begin{tabular}{llcc}
\hline
$n\qquad\qquad$ & $p_n$ & $\quad m(p_n)/p_n\quad$ & $1/e-m(p_n)/p_n$ \\
\hline
$5$ & $11$ & $0.5454545$ & $-0.1775751$ \\
$7$ & $17$ & $0.3529412$ & $\phantom{-}0.0149382$ \\
$8$ & $19$ & $0.4210526$ & $-0.0531731$ \\
$26$ & $101$ & $0.3663366$ & $\phantom{-}0.0015428$ \\
$100$ & $541$ & $0.3789279$ & $-0.0110484$ \\
$1000$ & $7919$ & $0.3725218$ & $-0.0046423$ \\
$10000$ & $104729$ & $0.3681884$ & $-0.0003089$ \\
$100000$ & $1299709$ & $0.3679662$ & $-0.0000867$ \\
$1000000$ & $15485863$ & $0.3676930$ & $\phantom{-}0.0001864$ \\
\hline
\end{tabular} 
\end{center}

\medskip

Together with our colleague M. V\^aj\^aitu~\cite{CVZ2000b}, we showed that when $p$ is sufficiently 
large the
number
of $0$-\RPs is about $p/e$ for almost all sequences. 
Since most sequences have no particular simple 
defining rule, we may say that the non-representation of $p/e$ residue classes modulo $p$ is a
general feature that characterizes randomness.

\begin{center}
\begin{figure}[htb]
\centering
\hfill
   \includegraphics[angle=-90,width=0.47\textwidth]{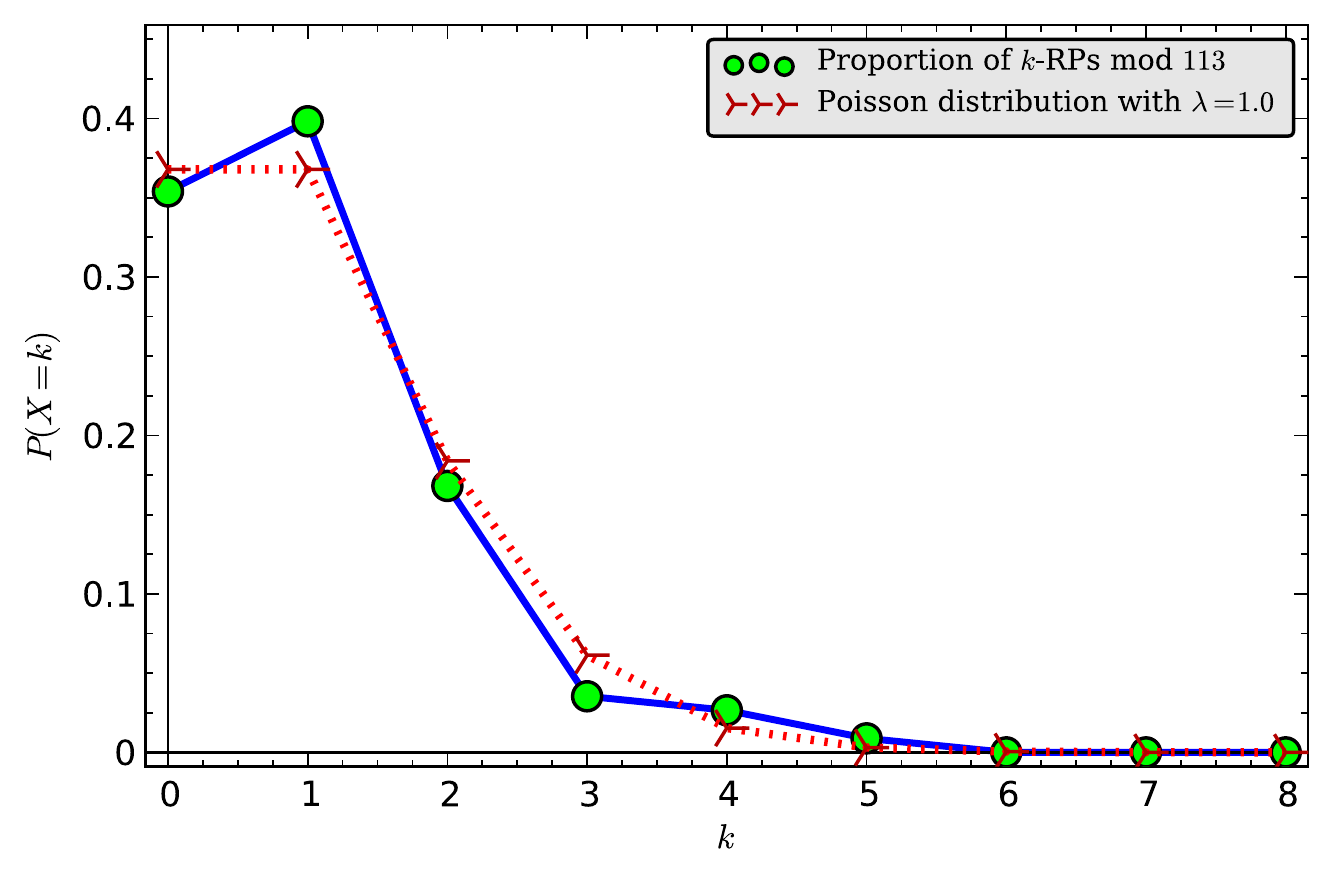}
\hfill\hfill
   \includegraphics[angle=-90,width=0.47\textwidth]{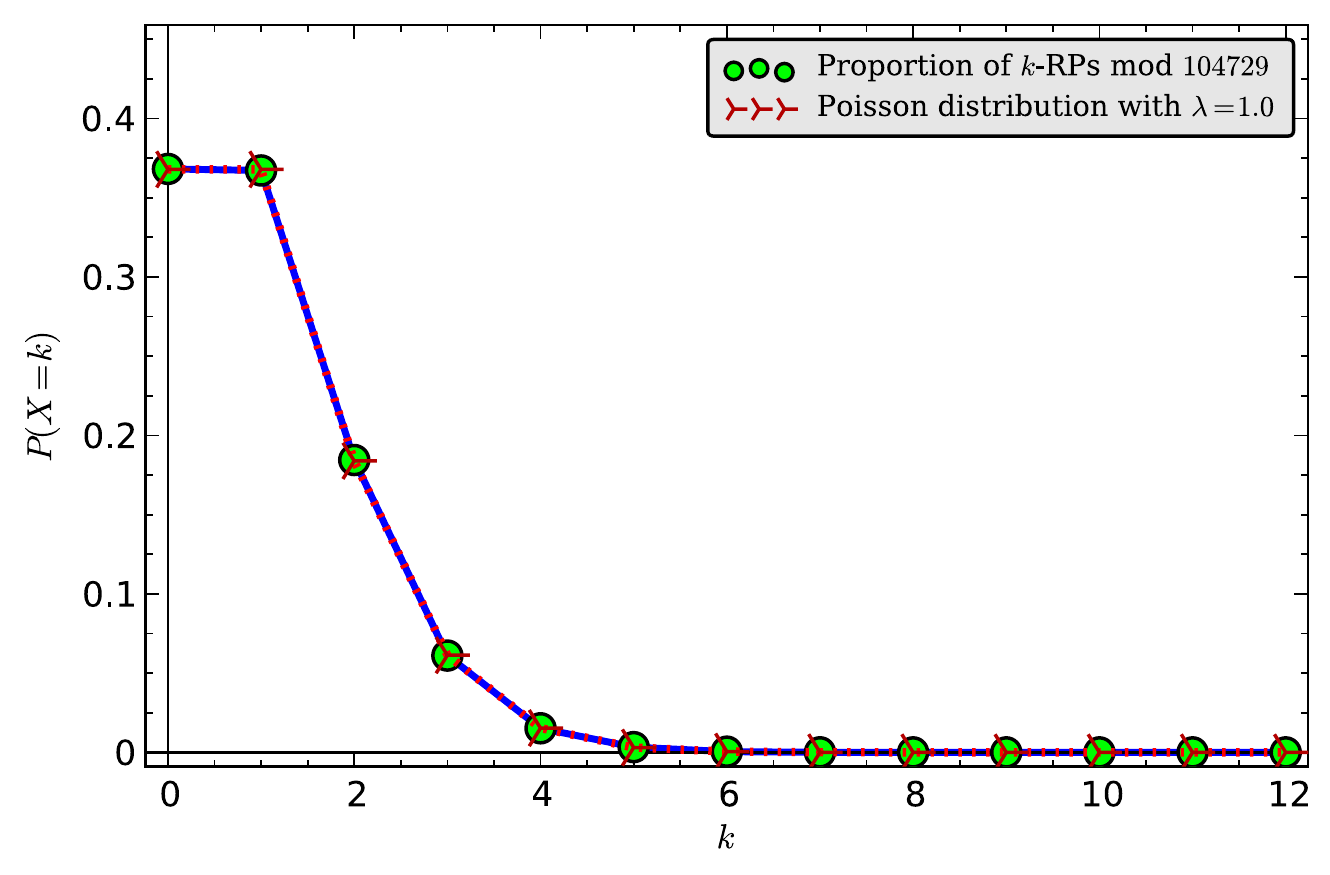}
\hfill\mbox{}
\hfill
   \includegraphics[angle=-90,width=0.47\textwidth]{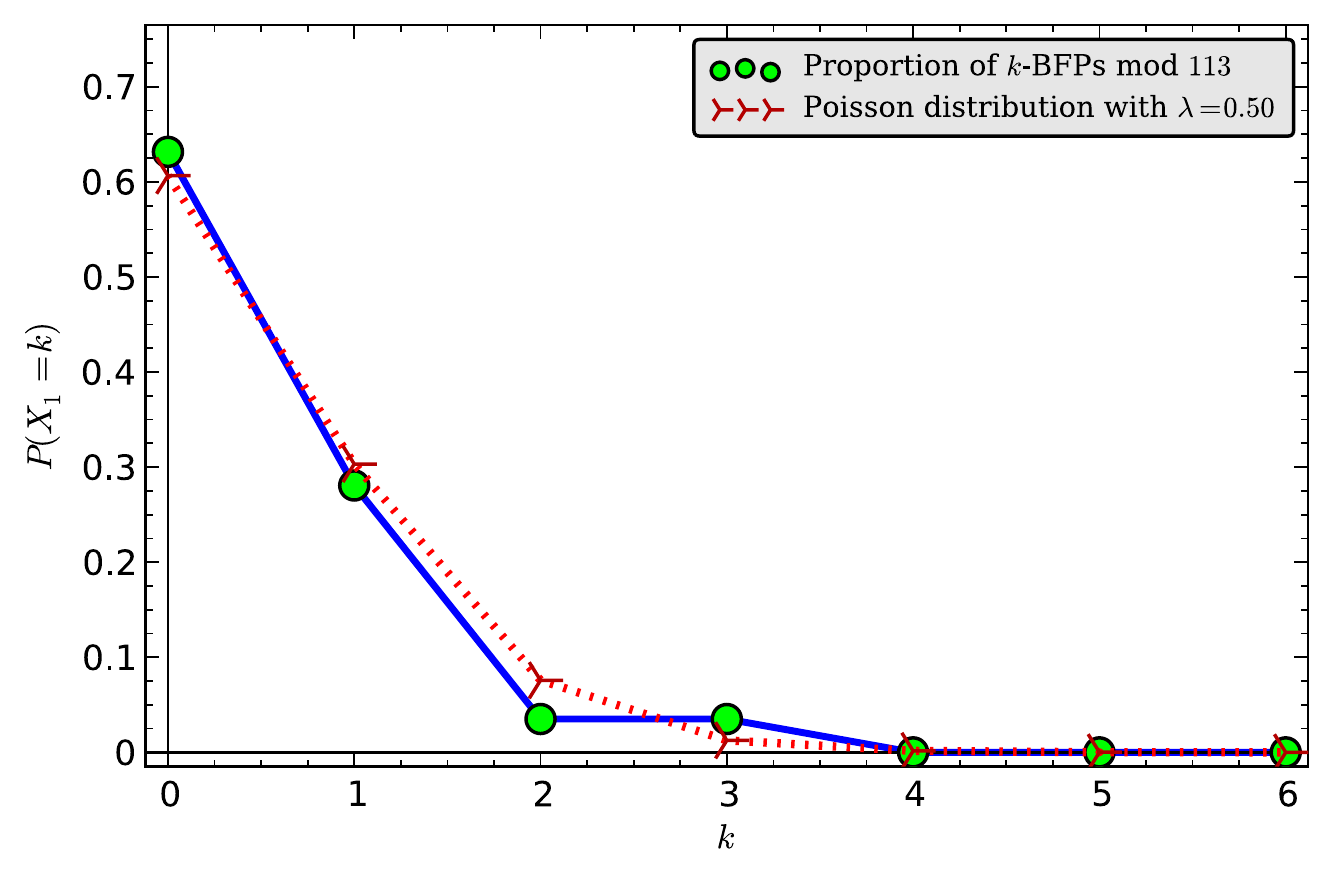}
\hfill\hfill
   \includegraphics[angle=-90,width=0.47\textwidth]{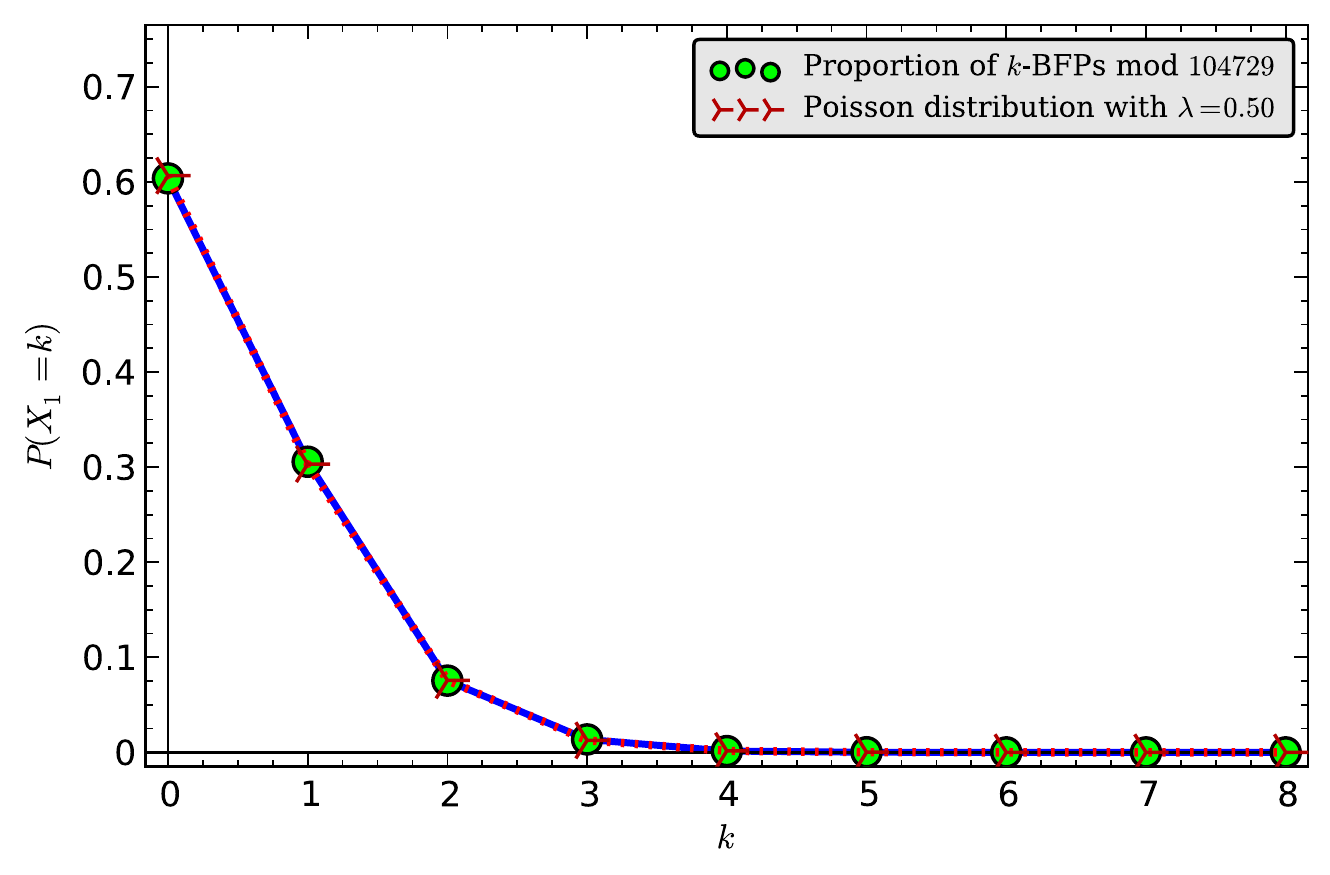}
\hfill\mbox{}
\caption{The proportions of $k$-\RPs of the $30$th prime (the figures on the left) and of the 
$10\,000$th
prime (the figures on the right). In the figures at the bottom, only the \RPs obtained from 
factorials from the first half, that
is, only the frequencies of $1!$, $2!$, $\dots$, $\big(\frac{p-1}{2}\big)! \pmod* p$ are counted. 
The proportions are compared  with the Poisson distribution with means $\lambda=1$ and
$\lambda=1/2$, \mbox{respectively}.
}
 \label{Figure2}
 \end{figure}
\end{center}

The more general statement~\cite[Theorem 1]{CVZ2000b}
says that if $\lambda \in (0,1]$ is fixed, $p$ is a large prime number, and $n\sim \lambda p$,
then almost all sequences of length $n$ chosen from  a 
subset $\cN\subset \FF_p$ having $n$ elements omit about $n/e^{\lambda}$ classes of
$\cN$.

We remark that the proportions of $k$-\RPs of the factorials are close to their conjectured limits 
even for small
primes $p$, as can be seen from the two example shown in  Figure~\ref{Figure2}
for $p=113$ and $p=104729$ (the $30$th and the $10\,000$th primes).
For $p=113$, there are $40$ points that are missed by factorials, $45$
points that are hit exactly once, $19$ points that are hit exactly twice and so on.
The record values for the two primes are  the residue classes $57$, which is 
hit exactly $5$ times, since
\begin{equation*}
	     20!\equiv  40!\equiv 50!\equiv 89!\equiv 101! \equiv 57 \pmod* {113}  \,,
\end{equation*}
and 
$78919$, which is hit exactly $9$ times, since
\begin{equation*}
\begin{split}
     2470!&\equiv 2742!\equiv 20986!\equiv 29734!\equiv 36188!\\
&\equiv 39370! \equiv 39865!\equiv40787!\equiv 65457! \equiv 78919 \pmod* {104729}\,.
\end{split}
\end{equation*}

The fast decay of the number of $k$-\RPs as $k$ increases is a 
 widespread phenomenon in different contexts and it can be investigated under 
 the following generic query.
\begin{problem}\label{Problem1}
Let $\cM$ be a set of positive 
integers and suppose $\cN\subset\cM$ is a
'large enough' subset. 
What is the expected  proportion of numbers that are represented
exactly $k$ times by the sequence $f(1)$,$f(2)$, $\dots$, $f(n)$, where
$\cN:=\big\{f(1),f(2),\dots,f(n)\big\}$?     
\end{problem}
The most expected answer to the question in this problem 
is given in the following section.

 \section{The limits of the average and of the variation}\label{SectionAverageAndMean}

\noindent 
Denote by $A(k;M,n)$ the average of  
the proportions of elements of $\cN\subset\cM$ that are represented $k$ times by vectors 
$\bx \in \cT$, that is,
\begin{equation}\label{eqrho}
     A(k;M,n):=\frac{1}{|\cT|} \sum_{\bx \in \cT}\frac {m_k(\bx) }{n}\,,
\end{equation}
where the counter $m_k(\bx)$ is defined by~\eqref{eqmkx}.
For any pair of elements $x, y \in \cM$, we define
\begin{equation*}
     \delta(x,y)=\twopartdef{1,}{x=y}{0,}{x \neq y}\,.
\end{equation*}
Similarly, for any vector $\bx=(x_1,x_2,\dots,x_n)\in \cT$ and any $y\in \cM$, we denote
\begin{equation*}
\delta_k(\bx,y)=
\begin{cases}
1,       & \text{if exactly $k$ components of $\bx$ coincide with $y$,}\\
0,  &  \text{else.}
\end{cases}
\end{equation*}
Then,  by the inclusion-exclusion principle, the counting function $m_k(\bx)$ can be expressed as
 \begin{equation}\label{eqm1}
   \begin{split}
     m_k(\bx)&=\Big|\{y\in \cN  :\  \delta_k(\bx,y)=1\}\Big|\\
&=\sum_ {y\in \cN}\sum_{\substack{ \cL\subset \{1,2,\dots, n\}\\  |\cL|=k}}  
     \prod_{\substack{ j\in \cL\\i \in \{1,2,\dots,n\}\setminus \cL} }
     \delta(x_j,y)\Big(1-\delta(x_i,y)\Big)\,.
   \end{split}
\end{equation}
Since for each $y\in \cN$ there are ${n \choose k}$ choices for the positions of the components of
$\bx$ occupied by $y$, while the remaining $M-k$ positions can take any of the remaining $M-1$
values, by \eqref{eqm1}  we obtain the following closed form
expression of the average introduced by \eqref{eqrho}:
 \begin{equation}\label{eqpk}
   \begin{split}
     A(k;M,n)&=\frac{1}{|\cT|} \sum_{\bx \in \cT}\frac {m_k(\bx)}{n}\\
     &=\frac{1}{nM^n}\sum_{\bx \in \cT}\sum_ {y\in \cN}
	  \sum_{\substack{\cL\subset \{1,2,\dots,n\}\\  |\cL|=k}}  
	  \prod_{\substack{ j\in \cL\\i \in \{1,2,\dots,n\}\setminus \cL	  }  }
	  \delta(x_j,y)\Big(1-\delta(x_i,y)\Big)\\
     &=\frac{1}{M^n}{n \choose k}\big(M-1\big)^{n-k}\,.
   \end{split}
\end{equation}
This implies that 
\begin{equation}\label{eqLimitMean}
	\begin{split}
	A(k;M,n) =\frac{1}{k!}\cdot\frac{n(n-1)\cdots(n-k+1)}{M^k}
	       \left(1-\frac{1}{M}\right)^{n-k}
		    \to \frac{\lambda^k}{k!}e^{-\lambda},
	\end{split}
\end{equation}
provided that $k$ is fixed and both $M,n\to \infty$ such that the ratio 
$n/M$ tends to $\lambda$.

\medskip

Next we find the limit of the variance of $A(k;M,n)$ about its limit mean determined 
by the assymptotic estimate~\eqref{eqLimitMean} 
Let  
\begin{equation}\label{eqM2}
     M_2(k;M,n):=\frac{1}{|\cT|}
     \sum_{\bx \in \cT}
	  \left(\frac{m_k(\bx)}{n}-\frac{\lambda^k}{k!}e^{-\lambda}\right)^2
\end{equation}
 be the second  moment about the mean. 
%
Expanding the binomial, we find that it can be \mbox{written as}
\begin{equation}\label{eqM2S2}
	\begin{split}
     M_2(k;M,n)
   &=\frac{\lambda^{2k}}{k!^2}e^{-2\lambda}-
     \frac{2\lambda^k}{k!}e^{-\lambda}A(k;M,n) +S_2(k;M,n)\,,
	\end{split}
\end{equation}
where we denoted
\begin{equation*}
S_2(k;M,n)=\frac{1}{|\cT|}
	  \sum_{\bx \in \cT}\left(\frac{m_k(\bx)}{n}\right)^2
     =\frac{1}{M^nn^2}\sum_{\bx \in \cT}m^2_k(\bx).
\end{equation*}
By \eqref{eqm1} and the same argument used to derive the expression \eqref{eqpk},
we see that
\begin{equation}\label{eqS2bis}
\begin{split}
     S_2(k;M,n)&=\frac{1}{M^nn^2}\sum_{\bx \in \cT}
			 \Bigg(\sum_ {y\in \cN}\delta_k(\bx,y)\Bigg)^2\\
&=\frac{1}{M^nn^2}\sum_{\bx\in \cT}
     \sum_ {y,y' \in \cN}\delta_k(\bx,y)\delta_k(\bx,y')\\
&=\frac{1}{M^nn^2}\Bigg(\sum_{\bx\in \cT}
     \sum_ {y\not = y' \in \cN}\delta_k(\bx,y)\delta_k(\bx,y')
     +\sum_{\bx\in \cT}
     \sum_ {y=y' \in\cN}\delta_k(\bx,y)\delta_k(\bx,y')\Bigg)\\
&=\frac{1}{M^nn^2}\left(n(n-1){n \choose k}{n-k \choose k}(M-2)^{n-2k}
     +n{n \choose k}(M-1)^{n-k}\right).
\end{split}
\end{equation}
We denote and rewrite the two terms on the last line of \eqref{eqS2bis} as
\begin{equation}\label{eqSISII}
	\begin{split}
	     S_{I} (k;M,n)=&
\left(1-\frac{1}{n}\right)\cdot \frac{1}{k!^2}\cdot\frac{n(n-1)\cdots (n-k+1)}{M^k}\\
\phantom{S_{I} (k;M,n)=}&\phantom{\left(1-\frac{1}{n}\right)\cdot \frac{1}{k!^2}}\times
	\frac{(n-k)(n-k-1)\cdots (n-2k+1)}{M^k}\cdot\left(1-\frac{2}{M}\right)^{n-2k},\\
	  S_{II} (k;M,n)=&
     \frac{1}{n}\cdot \frac{1}{k!}\cdot\frac{n(n-1)\cdots (n-k+1)}{M^k}\cdot
	\left(1-\frac{1}{M}\right)^{n-k}.
	\end{split}
\end{equation}
Provided that $k$ is fixed and both $M, n\to \infty$ such that the ratio 
$n/M$ tends to some fixed constant $\lambda\in (0,1]$, it follows that
\begin{equation*}
	\begin{split}
	S_{I} (k;M,n) \to \frac{\lambda^{2k}}{k!^2}e^{-2\lambda}\quad \text{ and }\quad
	   S_{II} (k;M,n) \to 0\,,
	\end{split}
\end{equation*}
which means that
\begin{equation*}
	\begin{split}
	\lim_{\begin{subarray}{c}n,M\rightarrow \infty\\ \frac{n}{M}\rightarrow \lambda
\end{subarray}}S_2(k;M,n)=\left(\frac{\lambda^k}{k!}e^{-\lambda}\right)^2.
	\end{split}
\end{equation*}
On inserting this limit and the asymptotic estimate~\eqref{eqLimitMean} on the right side of 
relation~\eqref{eqM2S2},  we find that the limit of the second moment about the mean is zero.

\begin{theorem}\label{Theorem1}
Let $\lambda \in (0,1]$ and let $k$ be a fixed integer. 
Suppose that $n$ and $M$ are integer variables and both increase tending to infinity while their 
ratio $n/M$ tends to $\lambda$.
Then, the average defined by relation~\eqref{eqrho} and the second square moment defined 
by~\eqref{eqM2} have the following limits:
\begin{equation*}
  \lim_{\substack{n,M\rightarrow \infty\\n/M\to\lambda}} A(k;M,n) = 
      \frac{\lambda^k}{k!}e^{-\lambda}
      \quad\text{and}\quad 
  \lim_{\substack{n,M\rightarrow \infty\\n/M\to\lambda}} M_2(k;M,n) = 0\,.
\end{equation*}
\end{theorem}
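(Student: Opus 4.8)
The plan is to handle the two limits separately, since each reduces to assembling the explicit computations already carried out above and then passing to the limit; the only real work lies in the second-moment count.

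For the mean I would start from the closed form \eqref{eqpk}, namely $A(k;M,n)=M^{-n}\binom{n}{k}(M-1)^{n-k}$, and factor it as in \eqref{eqLimitMean} into three pieces: the constant $1/k!$, the falling-factorial ratio $n(n-1)\cdots(n-k+1)/M^k$, and the power $(1-1/M)^{n-k}$. With $k$ fixed and $n/M\to\lambda$, the falling factorial divided by $M^k$ behaves like $(n/M)^k\to\lambda^k$, while $(1-1/M)^{n-k}=\exp\bigl((n-k)\log(1-1/M)\bigr)\to e^{-\lambda}$ since $(n-k)/M\to\lambda$. Multiplying the three factors yields the first limit $\lambda^k e^{-\lambda}/k!$; this part is routine.

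For the variance I would expand the square defining $M_2(k;M,n)$ in \eqref{eqM2} into the three-term form \eqref{eqM2S2}: the constant $\lambda^{2k}e^{-2\lambda}/k!^2$, minus $2\lambda^k e^{-\lambda}/k!$ times $A(k;M,n)$, plus the pure second moment $S_2(k;M,n)$ of $m_k(\bx)/n$. The middle term is controlled by the mean limit just established, so everything reduces to computing $\lim S_2$. I would write $m_k(\bx)=\sum_{y\in\cN}\delta_k(\bx,y)$, square it, and separate the diagonal $y=y'$ from the off-diagonal $y\neq y'$ contributions exactly as in \eqref{eqS2bis}; the diagonal produces $S_{II}$ and the off-diagonal produces $S_{I}$, the two terms recorded in \eqref{eqSISII}.

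The only genuinely delicate step is the off-diagonal count. For fixed distinct $y,y'\in\cN$ one needs the number of tuples $\bx\in\cT$ in which $y$ and $y'$ each occur exactly $k$ times: choose the $\binom{n}{k}$ positions for $y$, then $\binom{n-k}{k}$ of the remaining positions for $y'$, and let the other $n-2k$ coordinates range over the surviving $M-2$ values, giving $\binom{n}{k}\binom{n-k}{k}(M-2)^{n-2k}$, summed over the $n(n-1)$ ordered pairs. With $k$ fixed and $n/M\to\lambda$, $S_{I}$ then factors into a product of two falling-factorial ratios each tending to $\lambda^k$, times $(1-2/M)^{n-2k}\to e^{-2\lambda}$ and the harmless $(1-1/n)\to1$, so $S_{I}\to\lambda^{2k}e^{-2\lambda}/k!^2$; meanwhile $S_{II}$ carries an extra factor $1/n$ and hence vanishes. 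Therefore $S_2\to(\lambda^k e^{-\lambda}/k!)^2$, and substituting this together with the mean limit into \eqref{eqM2S2} makes the three copies of $\lambda^{2k}e^{-2\lambda}/k!^2$ cancel as $1-2+1=0$, giving $\lim M_2=0$ and completing the argument.
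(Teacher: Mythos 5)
Your proposal is correct and follows essentially the same route as the paper: the closed form \eqref{eqpk} for the mean, the expansion \eqref{eqM2S2} of the second moment, and the diagonal/off-diagonal split of $S_2$ into $S_{II}$ and $S_{I}$ with the counts $n(n-1)\binom{n}{k}\binom{n-k}{k}(M-2)^{n-2k}$ and $n\binom{n}{k}(M-1)^{n-k}$ are exactly the paper's computation, down to the final cancellation $1-2+1=0$. No gaps; your off-diagonal count is the one delicate step and you carry it out correctly.
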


\section{Sharp uniform estimate of the average and of the second moment}\label{SectionUniformLimit}

\begin{theorem}\label{Theorem3}
Let $\lambda\in(0,1]$, $\gamma\in[0,1)$ and let $k\ge 0$  be integer.
Suppose the integer variables $M$ and $n$  satisfy the inequalities $k\le n\le M$  and 
$M=n/\lambda +O(n^\gamma)$, uniformly on $\lambda$ and $k$ as $n$ tends to infinity while $\gamma$ 
is fixed. 
Then
\begin{align}\label{}
   A(k;M,n) &=
\frac{\lambda^k}{k!}e^{-\lambda}\left(1+O\big(\lambda(\lambda+k)n^{\gamma-1}\big)\right)
\label{eqAu}
\intertext{and}
  M_2(k;M,n) &=O\left(\frac{\lambda^k}{k!}\cdot\frac{1}{n}
	\Big(1+\frac{\lambda^{k+1}(\lambda+k)}{k!}n^{\gamma}\Big)\right),
\label{eqM2u}
\end{align}
where the constants implied in the estimates are independent of $\lambda, \gamma$ and $k$.
\end{theorem}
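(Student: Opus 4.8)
The plan is to derive both estimates directly from the exact closed forms already established, namely the formula \eqref{eqpk} for $A(k;M,n)$ and the decomposition \eqref{eqM2S2}--\eqref{eqSISII} for $M_2(k;M,n)$; no new identities are needed. The whole content is to upgrade the three limits computed just before Theorem~\ref{Theorem1} into effective estimates by tracking the dependence on $\lambda$ and $k$ through the hypothesis $M=n/\lambda+O(n^\gamma)$. Throughout I would set $\mu:=n/M$ and $a:=\frac{\lambda^k}{k!}e^{-\lambda}$, and record the single basic input $M/n=1/\lambda+O(n^{\gamma-1})$, equivalently $\mu=\lambda\big(1+O(\lambda n^{\gamma-1})\big)$. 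The natural tool is to pass to logarithms and Taylor-expand $\log(1-t)$.

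For \eqref{eqAu} I would factor the closed form \eqref{eqpk} as
\[
 A(k;M,n)=\frac{\lambda^k}{k!}\cdot\frac{n(n-1)\cdots(n-k+1)}{(M\lambda)^k}\cdot\Big(1-\tfrac1M\Big)^{n-k},
\]
and estimate $\log\big(A/a\big)$ term by term. Writing $\frac{n-j}{M}=\mu(1-j/n)$, the first factor contributes $k\log(\mu/\lambda)+\sum_{1\le j<k}\log(1-j/n)$, while expanding $\log(1-1/M)$ gives $(n-k)\log(1-1/M)+\lambda=(\lambda-\mu)+\frac kM-\frac{n-k}{2M^2}+O(n/M^3)$. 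Using $\mu=\lambda(1+O(\lambda n^{\gamma-1}))$ one finds $k\log(\mu/\lambda)=O(k\lambda n^{\gamma-1})$, $\lambda-\mu=O(\lambda^2 n^{\gamma-1})$, $k/M=O(\lambda k n^{-1})$ and $(n-k)/M^2=O(\lambda^2 n^{-1})$; since $\gamma\ge0$ forces $n^{-1}\le n^{\gamma-1}$, all of these are $\le C\lambda(\lambda+k)n^{\gamma-1}$ with an absolute $C$. The one remaining piece, the falling-factorial factor $\sum_{1\le j<k}\log(1-j/n)=-O(k^2/n)$, is the place that needs care; it is of lower order in the range where the estimate is informative, and one controls it cleanly via $0\le 1-\prod_{j<k}(1-j/n)\le k^2/(2n)$.

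For \eqref{eqM2u} I would start from \eqref{eqM2S2}, observe from \eqref{eqSISII} that $S_{II}(k;M,n)=A(k;M,n)/n$ exactly, and recognise that $n(n-1)\cdots(n-k+1)\cdot(n-k)\cdots(n-2k+1)=n(n-1)\cdots(n-2k+1)$, so that $S_I$ has the same shape as $A$ with $k,1/M$ replaced by $2k,2/M$. Running the previous estimate with these substitutions yields $A=a(1+O(\varepsilon))$ and $S_I=a^2(1+O(\varepsilon))$, where $\varepsilon=\lambda(\lambda+k)n^{\gamma-1}+k^2/n$. Substituting into \eqref{eqM2S2} produces the crucial cancellation $a^2-2aA+S_I=a^2\,O(\varepsilon)$, whence $M_2=a^2\,O(\varepsilon)+A/n$. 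The final step is the arithmetic that makes the bound clean: $a^2\cdot(k^2/n)=(a k^2)\cdot(a/n)=O(a/n)$ because $a k^2=\frac{\lambda^k k^2}{k!}e^{-\lambda}=O(1)$ uniformly in $\lambda\le1$ and $k$, so the $k^2/n$ part of $\varepsilon$ is swallowed into the $A/n=O(a/n)$ term; meanwhile the genuine $n^\gamma$-part gives $a^2\lambda(\lambda+k)n^{\gamma-1}$, which is exactly $\frac{\lambda^k}{k!}\cdot\frac1n\cdot\frac{\lambda^{k+1}(\lambda+k)}{k!}n^{\gamma}$ up to the harmless factor $e^{-2\lambda}\le1$. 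This yields \eqref{eqM2u}.

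The main obstacle throughout is not any single estimate but the uniformity: one must verify that every implied constant is genuinely free of $\lambda,\gamma,k$. The delicate points are the finite products $\prod_{j<k}(1-j/n)$ and its $2k$-analogue, whose size grows with $k$, and the bookkeeping that separates contributions scaling like $n^{\gamma-1}$ from those scaling like $n^{-1}$. For the second moment this is resolved by the boundedness of $\lambda^k k^2/k!$, which is what ultimately tames the $k$-dependence and keeps the leading error term at the advertised size $\frac{\lambda^k}{k!}\cdot\frac1n$.
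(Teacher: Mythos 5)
Your proposal follows the same route as the paper's own proof: both derive \eqref{eqAu} directly from the closed form \eqref{eqpk} via the expansions of $(n/M)^k$ and $(1-a/M)^{n-k}$ (the paper's \eqref{eqpow} and \eqref{eqexp}), and both obtain \eqref{eqM2u} from the decomposition \eqref{eqM2S2}--\eqref{eqSISII}; the cancellation $a^2-2aA+S_I=a^2\,O(\varepsilon)$ that you spell out is exactly what the paper compresses into ``obtained similarly''. In two respects you are more careful than the printed proof: you make the exact identity $S_{II}=A/n$ and the $2k$, $2/M$ structure of $S_I$ explicit, and --- the substantive point --- you notice the falling-factorial factor $\prod_{1\le j<k}(1-j/n)$, which the paper silently discards when it passes from $n(n-1)\cdots(n-k+1)/M^k$ to $(n/M)^k$. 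Your absorption of this term in \eqref{eqM2u} via $\lambda^k k^2/k!=O(1)$ uniformly in $\lambda\le 1$ and $k$ is correct and actually repairs a step the paper never addresses.

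The one soft spot is your remark that for \eqref{eqAu} the $k^2/n$ contribution ``is of lower order in the range where the estimate is informative'': that is not true uniformly in $k\le n$, which is all the hypotheses give. Take $\lambda=1$, $\gamma=0$, $M=n$ (admissible, since $M=n/\lambda+O(1)$) and $k\asymp\sqrt n$: then
\begin{equation*}
\frac{A(k;M,n)}{\lambda^k e^{-\lambda}/k!}
 = e\prod_{j=0}^{k-1}\Bigl(1-\frac jn\Bigr)\Bigl(1-\frac 1n\Bigr)^{n-k}
 \longrightarrow e^{-1/2},
\end{equation*}
while the error claimed in \eqref{eqAu} is $O\bigl((1+k)/n\bigr)=O(n^{-1/2})$. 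So what your argument actually proves is $A=\frac{\lambda^k}{k!}e^{-\lambda}\bigl(1+O(\lambda(\lambda+k)n^{\gamma-1}+k^2n^{-1})\bigr)$, and the extra $k^2n^{-1}$ cannot be dropped unless $k$ is restricted (say $k$ bounded, or $k=O(\lambda n^\gamma)$). This gap, however, is inherited from the theorem's claimed uniformity in $k$ and is present, unflagged, in the paper's own proof, which simply never accounts for the product $\prod_{j<k}(1-j/n)$; your write-up at least localizes the difficulty and resolves it where it matters for \eqref{eqM2u}.
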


\begin{proof}
 By the hypothesis of the theorem, we see that 
 $n/M = 1/(\lambda^{-1}+O(n^{\gamma-1}))$, so that 
 $1/M = \lambda (n^{-1}+O(\lambda n^{\gamma-2}))$ and
 $n/M = \lambda (1+O(\lambda n^{\gamma-1}))$.
 Then the asymptotic approximations of the main exp-log functions involved are:
 \begin{equation}\label{eqpow}
   \begin{split}
    \left(\frac{n}{M}\right)^{k}
      = \lambda^k\Big(1+O\big(\lambda  n^{\gamma-1}\big)\Big)^k
      = \lambda^k\Big(1+O\big(\lambda k n^{\gamma-1}\big)\Big)
   \end{split}
\end{equation}
and, if  $a=1$ or $2$, then 
 \begin{equation}\label{eqexp}
   \begin{split}
\left(1-\frac{a}{M}\right)^{n-k}=e^{(n-k)\log(1-a/M)}
  = e^{\frac{-an}{M}+O\left(\frac{k}{M}\right)}
  &= e^{-a\lambda}\left(1+O\big(\lambda^2n^{\gamma-1}+\lambda k n^{-1}\big)\right).
   \end{split}
\end{equation}
 On combining \eqref{eqpk}, \eqref{eqpow} and \eqref{eqexp}, we find that
 \begin{equation*}
   \begin{split}
   A(k;M,n) &= \frac{1}{k!}\cdot \lambda^k\Big(1+O\big(\lambda k n^{\gamma-1}\big)\Big)
      e^{-\lambda}\left(1+O\big(\lambda^2n^{\gamma-1}+\lambda k n^{-1}\big)\right)\\
       &= 
\frac{\lambda^k}{k!}e^{-\lambda}\left(1+O\big(\lambda(\lambda+k)n^{\gamma-1}\big)\right)\,.
   \end{split}
\end{equation*}
 The estimation of $M_2(k;M,n)$ is obtained similarly on combining 
 \eqref{eqM2S2}, \eqref{eqS2bis}, \eqref{eqSISII},\eqref{eqpow} and \eqref{eqexp}, which  completes 
the proof of 
the theorem,
 
\end{proof}

\smallskip
Now we can prove the uniform result in Theorem~\ref{TheoremMain}.
%
 For any $\eta >0$, let us split $\cT$ in two disjoint parts, that is,
 $\cT=\cT^{<}(\eta)\cup \cT^{\ge}(\eta)$ and $\cT^{<}(\eta)\cap \cT^{\ge}(\eta)=\emptyset$, where
 \begin{equation*}
   \begin{split}
   \cT^{<}(\eta)  = \Big\{\bx\in\cT :\ \Big|\frac {m_k(\bx)}{n}
     -\frac{\lambda^k}{k!}e^{-\lambda}\Big|<\eta\Big\}\quad  \text{ and }  \quad
   \cT^{\ge}(\eta)  = \Big\{\bx\in\cT :\ \Big|\frac {m_k(\bx)}{n}
     -\frac{\lambda^k}{k!}e^{-\lambda}\Big|\ge\eta\Big\}\,.\     
   \end{split}
\end{equation*}
Then, accordingly,
\begin{equation*}
   \begin{split}
   M_2(k;M,n)&=\frac{1}{|\cT|} \sum_{\bx \in \cT^{<}(\eta)}		
	       \left(\frac{m_k(\bx)}{n}-\frac{\lambda^k}{k!}e^{-\lambda}\right)^2
	       +\frac{1}{|\cT|} \sum_{\bx \in \cT^{\ge}(\eta)}		
	       \left(\frac{m_k(\bx)}{n}-\frac{\lambda^k}{k!}e^{-\lambda}\right)^2.\\
   \end{split}
\end{equation*}
Ignoring the contribution of the first sum, we obtain for $M_2(k;M,n)$ a bound from below:
\begin{equation*}
   \begin{split}
   M_2(k;M,n)\ge  \frac{1}{|\cT|} \sum_{\bx \in \cT^{\ge}(\eta)}\eta^2	
   =\frac{\eta^2}{|\cT|}\big(|\cT|-|\cT^{<}(\eta)|\big)\,.
   \end{split}
\end{equation*}
This can be rewritten conveniently as the following inequality:
\begin{equation}\label{eqM2last}
   \begin{split}
  \frac{|\cT^{<}(\eta)|}{|\cT|}\ge 1-\eta^{-2}M_2(k;M,n)\,.
   \end{split}
\end{equation}
On using the estimate~\eqref{eqM2u} on the right side of \eqref{eqM2last} and choosing 
$\eta = n^{-\delta}$, we find that
\begin{equation*}
   \begin{split}
   \frac{|\cT^{<}(n^{-\delta})|}{|\cT|}\ge 
   1-O\left(\frac{\lambda^k}{k!}\cdot\frac{1}{n^{1-2\delta}}
	\Big(1+\frac{\lambda^{k+1}(\lambda+k)}{k!}n^{\gamma}\Big)\right),
   \end{split}
\end{equation*}
which concludes the proof of the theorem.

\bigskip
\bigskip

\textbf{Acknowledgements: } Calculations and plots created using the free open-source mathematics
software system \texttt{SAGE}: 
\href{http://www.sagemath.org}{http://www.sagemath.org}.


\end{document}